\newcommand{\RR}{\mathbb R}
\newcommand{\CC}{\mathbb C}
\newcommand{\FF}{\mathbb F}
\newcommand{\PP}{\mathbb P}
\newcommand{\be}{\begin{equation}}
\newcommand{\ee}{\end{equation}}
\newcommand{\inn}[1]{\langle #1 \rangle}
\DeclareMathOperator{\tr}{\mathrm{tr}}
\DeclareMathOperator{\diag}{\mathrm{diag}}
\newtheorem{thm}{Theorem}
\newtheorem{prob}{Problem}
\newtheorem{lem}[thm]{Lemma}
\newtheorem{cor}[thm]{Corollary}
\newtheorem{prop}[thm]{Proposition}
\title{Energy Minimization in $CP^n$ Some Numerical and Analytical Results}
\author{Radel Ben Av, Assaf Goldberger, Giora Dula and Yossi Strassler}
\date{\today}
\begin{document}

\maketitle
\section{Introduction}
Minimizing Energy functional has been an active area of search both Numerically and Analytically for centuries.
In recent years many numerical and analytical results have been found \cite{3}
This approach has been proven useful also in the field of finding symmetric geometrical objects \cite{4}. In recent 
years there is a growth of interest in complex spaces. One of the driving forces behind this interest is their a
applicability to Quantum Mechanics.
In particular there is interest in $CP^n$ spaces as wave functions are elements in $CP^n$.

(Quantum) Random Access Codes - (Q)RAC have been defined by \cite{1}. These codes enable a communicating r bits 
using $s<r$ (q)bits, the caveat being that the receiver can retrieve the bits correctly with  probability $p<1$.  For example it has been shown that there is a QRAC with r=2 and s=1 but there is no RAC analogue. Yet 
another result is that QRAC with r=3 is also possible but there is no QRAC with r=4 and s=1 \cite{2}. It can be 
noted that the r=3 code has interesting geometrical properties that will be discussed later.

In the present work we will discuss solutions for a certain optimization problem in $CP^n$. We will provide 
numerical results for a range of parameters. We will also address analytically some sub-ranges and we will 
show that in these cases the numerical results agree with the analytical ones. Moreover we will claim that 
the results indicate the existence of geometrical structures that provide solutions of the optimization problem.

Let $\FF$ denote the field $\RR$ or $\CC$. We endow the vector space $F^n$ with the standard inner product 
and norm given by
$$\inn{v,w}=\sum_{i=1}^n v_i\overline{w_i}, \text{ and } ||v||=\inn{v,v}^{1/2}.$$

We wish to solve the following optimization problem:

\begin{prob}\label{prob:1}
	Given positive integers $n,m,p$, find
	$$M^p(m,n):=\min \sum_{1\le i<j\le m}|\inn{v_i,v_j}|^p\text{ s.t. } \forall\ 1\le l\le m \ : v_l\in F^n \text{ and } 
||v_l||=1.$$
\end{prob}

We think of solutions to this problem as ways to spread out $m$ points as much as possible in the unit sphere,
 more precisely, the projective space $\FF\mathbb P^{n-1}$ i.e. $\CC P^{n-1}$ for $\FF=\CC$. For $m\le n$ the 
problem is trivial, as we can set the vectors to be orthogonal to each other and $M^p(m,n)=0$.\\
\section{Numerical Approach}
In this section we use a numerical approach to solve problem 1. We applied a greedy algorithm along the following lines
    
\begin{enumerate}
    \item set an initial random configuration of n complex vectors $v_i , 1\leq i \leq n $. The components of each 
vector $v_i$ were m complex numbers such that both the real and the imaginary parts were chosen with uniform 
distribution in [0,1]. Each vector was then normalized using division  by the norm.
    \item set initial stepsize - $\delta$
    \item Loop until the stepsize $\delta$ is small enough or the number of sweeps is too big - 
    \begin{enumerate}
        \item Choose a random index  $k,  1\leq k \leq m$.
        \item choose a random component $l, 1\leq l \leq n$.
        \item choose a complex number - z - such that both the real and the imaginary parts are uniformly  
distributed in [-1,1].
        \item add $\delta z$ to the $l$'th component  of $v_k$.
        \item re-normalize $v_k$.
        \item if $M^p(m,n)$ has decreased accept the the suggested change. otherwise discard it.
        \item if too many changes have been accepted - increase $\delta$.
        \item if too few changes have been accepted - decrease $\delta$.
        
    \end{enumerate}
    \item Output the results - the value of $M^p(m,n)$, the value of all the final vectors $v_i$ and the values 
of $|<v_i,v_j>|$ for all the pairs $i,j$.
\end{enumerate}

Clearly this is not the best optimization algorithm, e.g. Newton-Raphson could be implemented. However it was easily and readily available to us.

For each value of $p,m,n$ we ran the minimization several times and the resulting optimum $M^p(m,n)$ was stable for p=2 and p=4. For p=6 and $n>7$ the minimum value of $M^p(m,n)$ was stable only up to the first 6 digits. Further investigation for this issue is required.
Nevertheless,  the final configuration was in many cases not the same. This indicates that the minimal $M^p(m,n)$ is (almost always) unique 
but the solution space is of higher dimension. For some values of (m,n,p) the value of $|\inn{v_i,v_j}|$ was unique. Moreover in some cases $|\inn{v_i,v_j}|=C$ for all $i,j$ where C is a function of (m,n,p). These cases are actually simplexes in $CP^n$. As can be seen it can occur that a simplex solution is probably the only solution for (m,n,p) while being only a point in the solution space for (m,n,p')  $p'\neq p$.\\

The resulting $M^p(m,n)$ are presented in the following tables. Table 1 presents the results for {\bf p=2}, 
Table 2 for {\bf p=4} and Table 3 for {\bf p=6}. The simplex cases are indicated with yellow background. The convergence rate was dependent on the values of $p,m,n$ 
in a non-trivial way. We have not addressed this issue yet.

All the results in table 1 seem very elegant. Indeed in the following section we will provide analytic solution 
that coincides with the numerical results. Moreover some of the results (e.g, $M^4(5,10)$) are also intriguing.   

Visual inspection indicated that the values of the minimal Energy for a given p and m tend to behave quadratically for large n in p=2, p=4. In order to check it we provide the approximate second derivative $D^p(m,n)$ as a function of n.
$$D^p(m,n)=M^p(m,n+1)-2M^p(m,n)+M^p(m,n-1)$$
The results are shown in tables 4 and 5.

This conjecture is evidently true for p=2 as can be seen from the analytical results in section 3.1. For p=4 the numerical result is accordance with the conjecture of equdistribution of vectors for large n as can be seen in 3.2. For p=6 and n=2 it also seems that the value is purely quadratic not only asymptotically but starting in finite n. For p=6 and $n>2$ the large m limit might have not been reached yet.

\begin{table}
		\includegraphics[trim={1cm 3.5cm 0 0},scale=0.8]{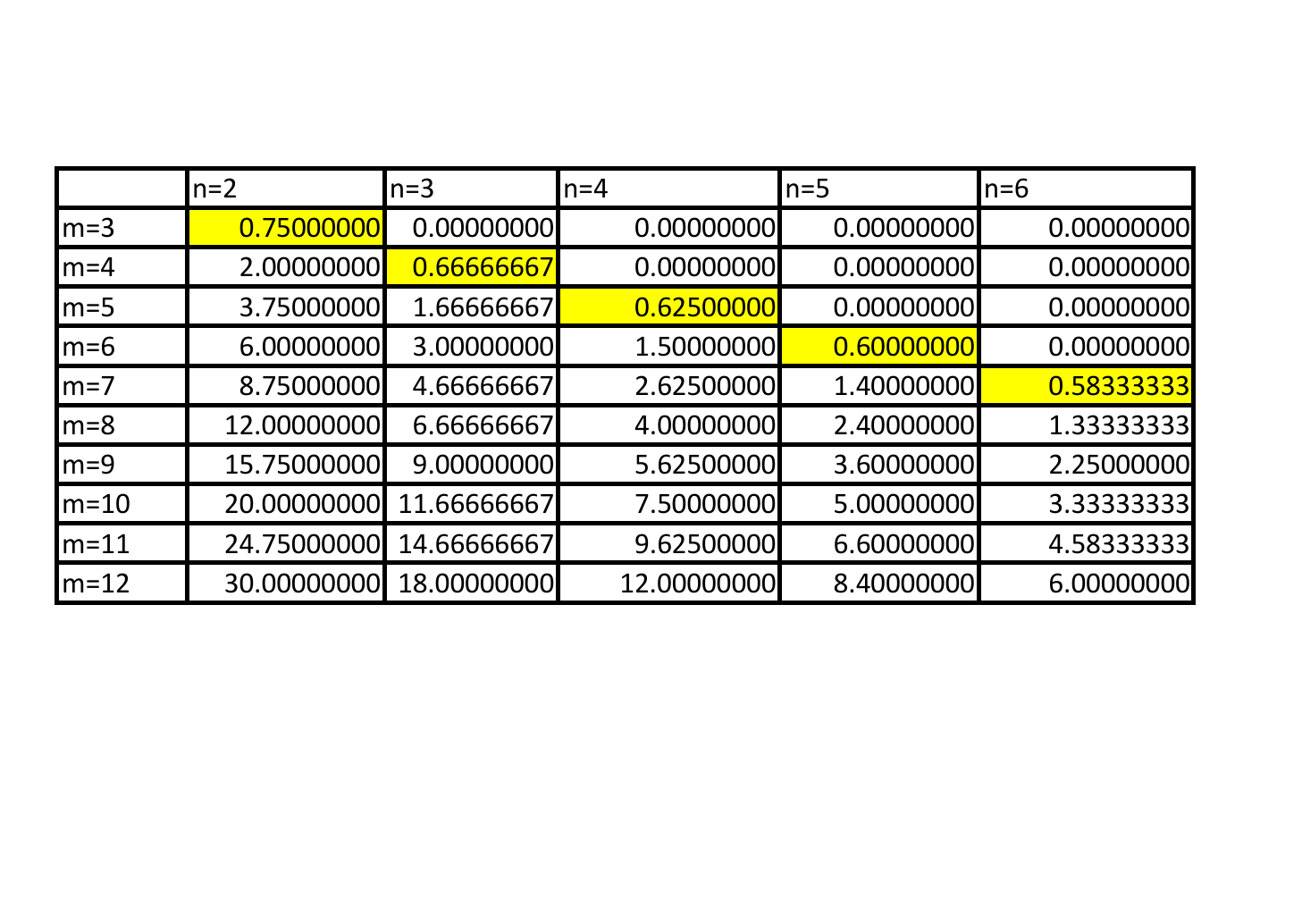}
\caption{ Minimum value for p=2}
\end{table}
\begin{table}
	
	\includegraphics[trim={1cm 2cm 0 0},scale=0.8]{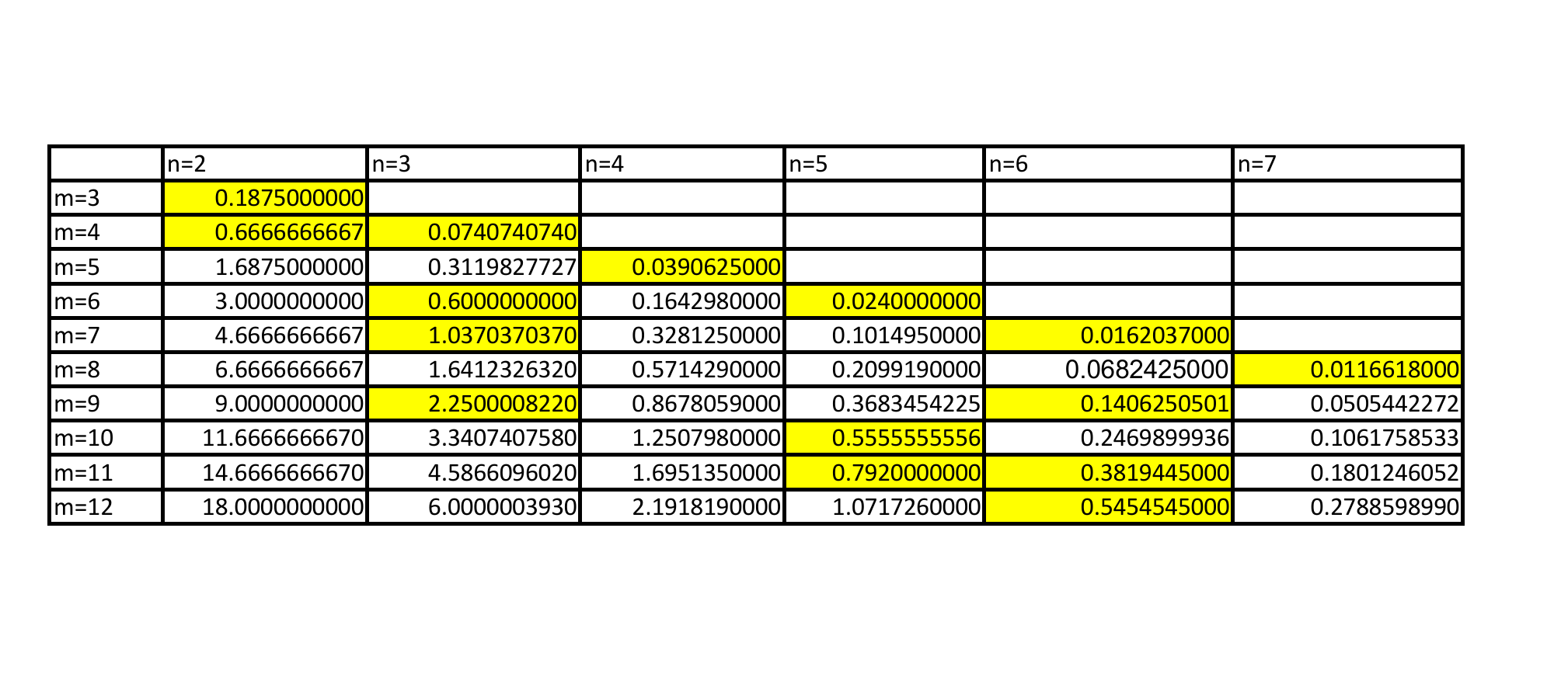}
	\caption{Minimum value for p=4}
\end{table}

\begin{table}
\includegraphics[trim={1cm 2cm 0 0},scale=0.8]{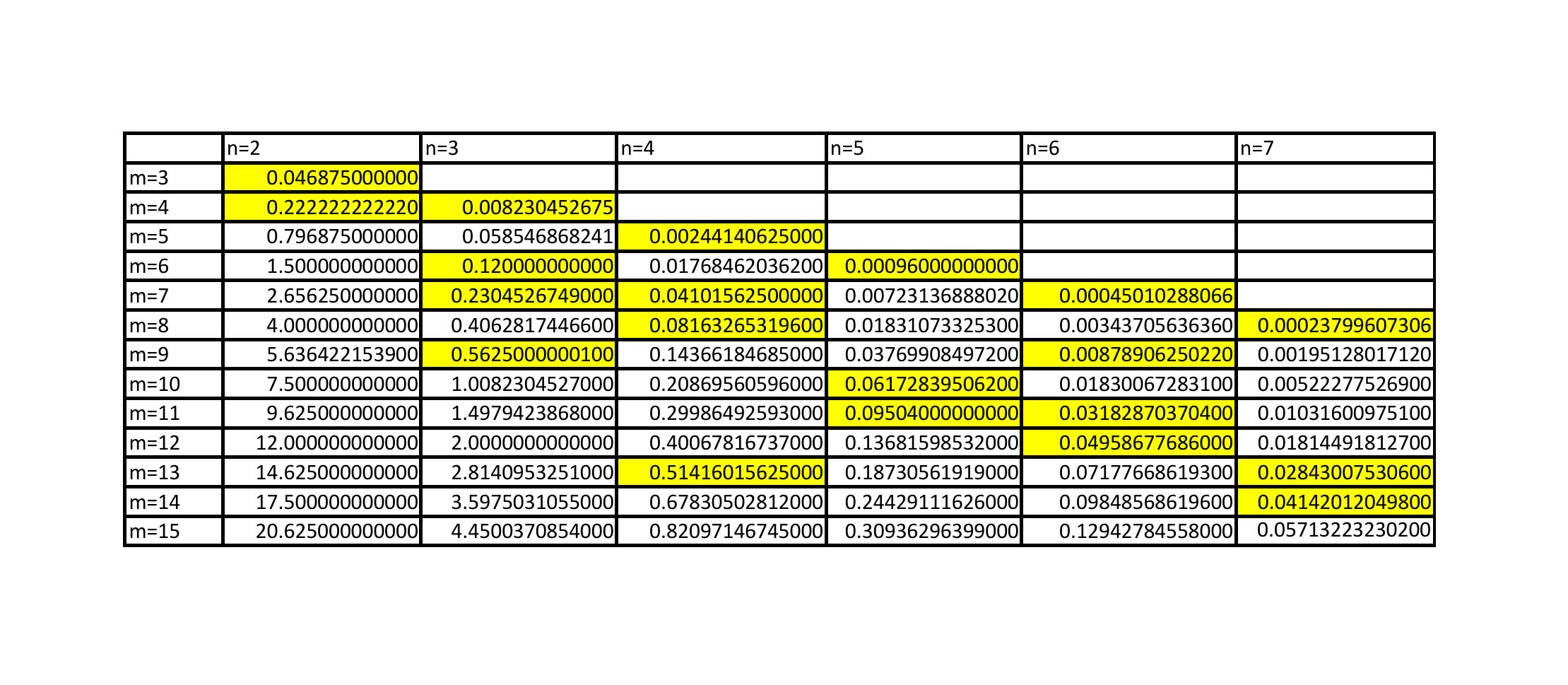}
\caption{Minimum value for p=6}
\end{table}

\begin{table}
	\includegraphics[trim={1cm 2cm 0 0},scale=0.8]{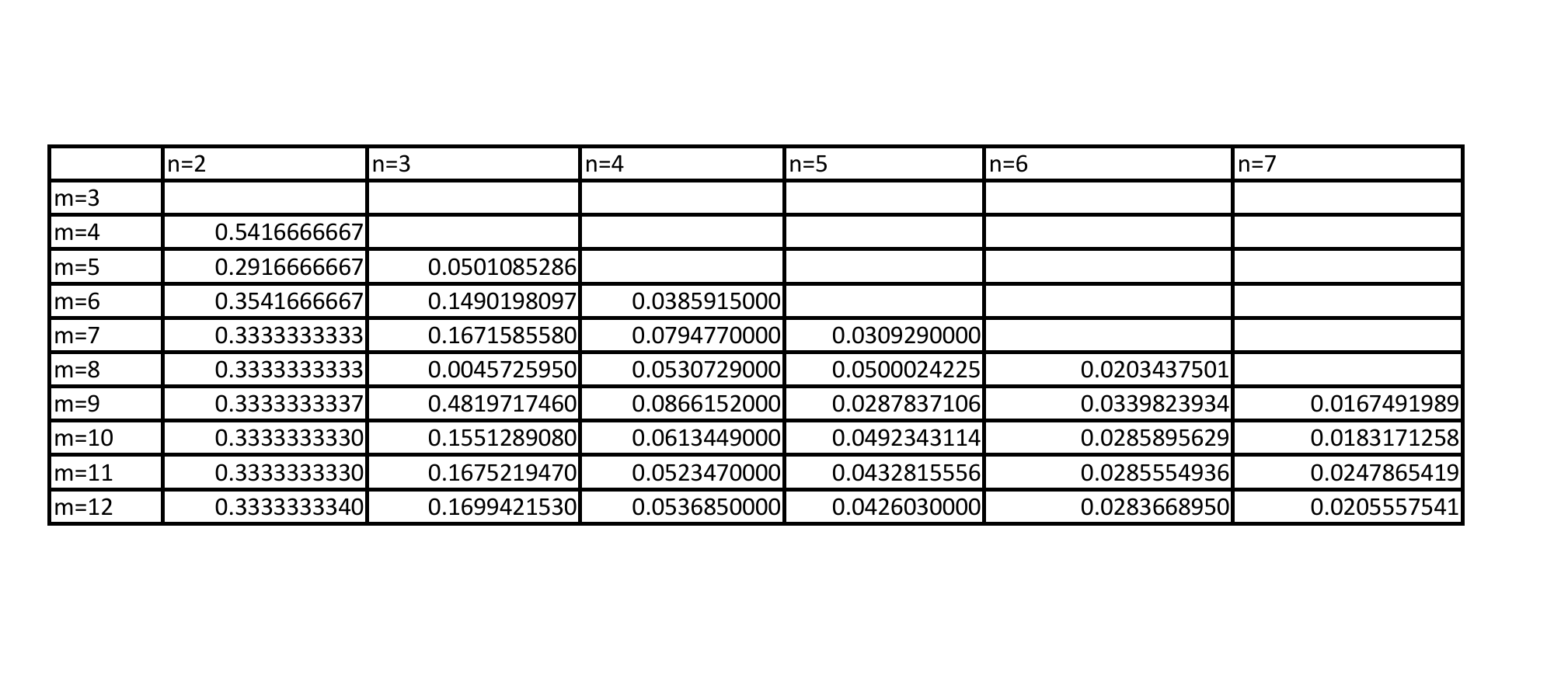}
	\caption{Numerical 2nd Derivative for p=4}
\end{table}

\begin{table}
	\includegraphics[trim={1cm 2cm 0 0},scale=0.8]{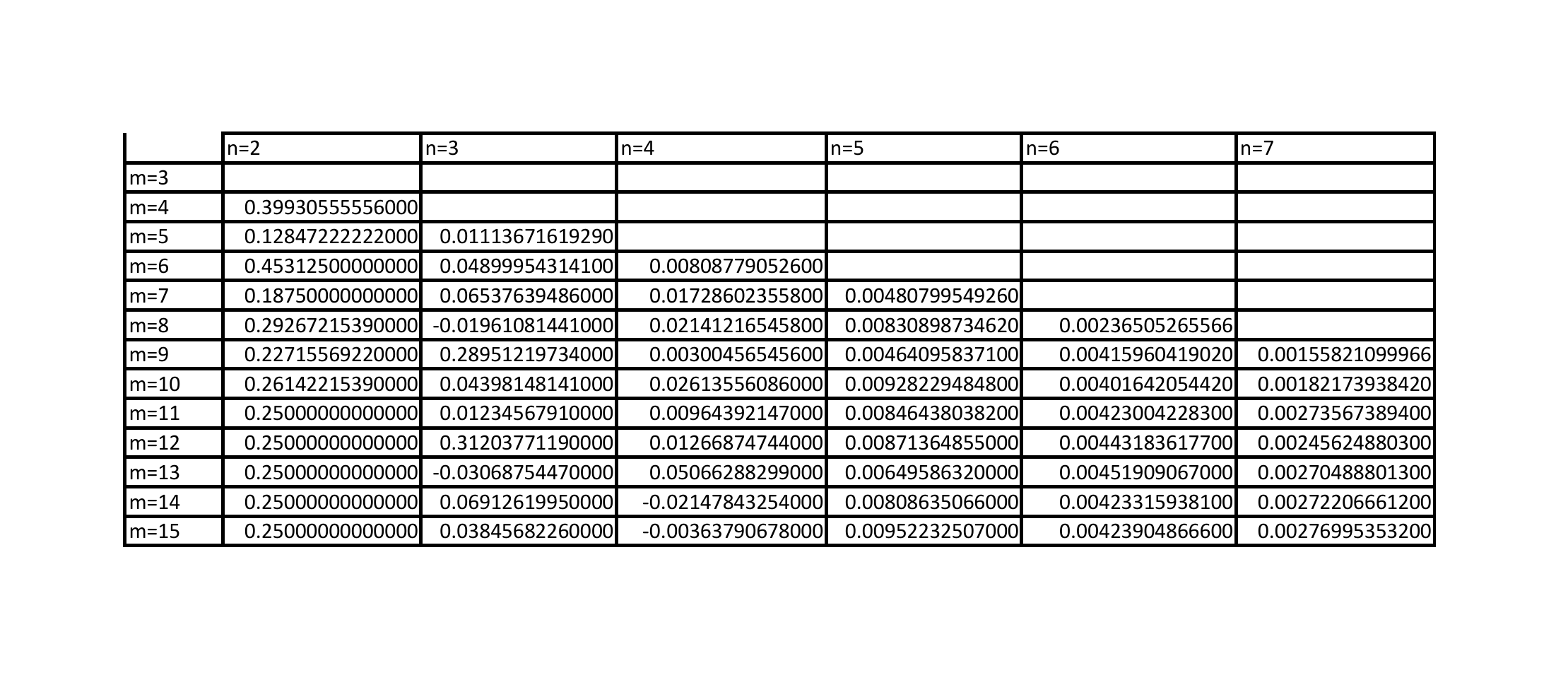}
	\caption{Numerical 2nd Derivative for p=6}
\end{table}

\section{Analytic Approach}
\subsection{p=2 Solution}

In order to solve this problem, we introduce a new problem which is in some way a relaxation of Problem \ref{prob:1}.

\begin{prob}\label{prob:2}
	Given positive integers $n<m$, find
	$$P(m,n):= \min \sum_{1\le i,j\le m}|\inn{v_i,v_j}|^2\text{ s.t. } \forall \ 1\le k \le m \ v_k\in F^n \text{ and } \sum_{k=1}^m ||v_k||^2=m.$$
\end{prob}
Every solution $(v_i)$ to Problem \ref{prob:1} is within the feasible region of Problem \ref{prob:2} and gives 
value of $2M(m,n)+m$ to its objective function. Therefore,
\be\label{eq:1} P(m,n)\le 2M(m,n)+m.\ee We will show below that there is a solution to Problem \ref{prob:2} within the 
feasible region of Problem \ref{prob:1}, which  will turn \eqref{eq:1} into an equality.\\

We turn now to the solution of Problem \ref{prob:2}. Every collection $(v_i)$ of $m$ vectors in $F^n$, will be encoded 
as a $m\times n$ matrix $V$ with $v_i$ as the $i$th row. The condition $\sum ||v_i||^2=m$ becomes $\tr(VV^*)=m$. The 
entries of $VV^*$ are the inner products $\inn{v_i,v_j}$ so the objective function becomes $\tr((VV^*)^2)$. We have the 
following equivalent formulation to Problem \ref{prob:2}.

\begin{prob}\label{prob:3}
	Given positive integers $n<m$, find
	$$P(m,n)=\min \tr(VV^*VV^*) \text{ s.t. } V\in F^{m\times n} \text{ and }\tr(VV^*)=m.$$
\end{prob}

\begin{proof}[Solution]
	One has $\tr(VV^*)=\tr(V*V)$ and $\tr(VV^*VV^*)=\tr((V^*V)^2)$. As $Q=V^*V$ ranges over all positive semidefinite
 Hermitian $n\times n$ matrices of trace $m$, we need to find the minimum of $\tr(Q^2)$ over all such matrices. As $\tr$ 
is unchanged under matrix conjugation, it is sufficient to restrict attention just to diagonal positive semidefinite 
matrices $Q$. Let $Q=\diag(\lambda_1,\ldots,\lambda_n)$. Then our problem is equivalent to finding the minimum of 
$\sum_{i=1}^n \lambda_i^2$ subject to $\lambda_i\ge 0$ and $\sum_{i=1}^n \lambda_i=n$. Clearly the minimum is achieved 
for equal $\lambda_i=m/n$, where $\sum \lambda_i^2=m^2/n.$ Rolling back, $P(m,n)=\min \tr(VV^*VV^*)=m^2/n$, and the 
minimizing $V$ can be taken to be any $m\times n$ matrix such that $V^*V= (m/n)I_n$. 
\end{proof}

We see that a solution to Problems \ref{prob:2} and \ref{prob:3} is obtained by a $m\times n$ matrix $V$ with orthogonal 
columns. Thus as one solution we can simply to take 
$$V_0=\sqrt{\frac{m}{n}}\begin{bmatrix}
I_n\\ \hline \\ 0_{m-n \times n}
\end{bmatrix},$$
and the general solution is $V=UV_0$, as $U$ ranges over the set of all unitary (orthogonal) matrices over 
$\mathbb C$($\mathbb R$). Our next step is to show that there exists a solution $V_1$ with all rows equal norm 
(necessarily $1$). This follows from the following Lemma.

\begin{lem}\label{lem:1}
	For every matrix $W\in F^{m\times n}$, there exists a unitary (orthogonal) matrix $U\in F^{m\times m}$ 
such that all rows of $UW$ have equal norm.
\end{lem}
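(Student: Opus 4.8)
The plan is to translate the statement about row norms into a statement about the diagonal entries of a Hermitian matrix, and then to equalize that diagonal by a sequence of elementary plane rotations. First I would set $A=WW^*$, an $m\times m$ positive semidefinite Hermitian matrix. For any unitary (orthogonal) $U$, the $i$th row of $UW$ has squared norm equal to the $(i,i)$ entry of $(UW)(UW)^*=U A U^*$. Hence the lemma is equivalent to the assertion that there is a unitary $U$ for which $UAU^*$ has all diagonal entries equal. Since $\tr(UAU^*)=\tr(A)$ is fixed under conjugation, the common diagonal value is forced to be $\bar d:=\tr(A)/m$; in the application at hand $A=V_0V_0^*$ has $\tr(A)=m$, so $\bar d=1$, matching the parenthetical remark that the equal norms must be $1$.

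The core claim is then that every Hermitian $A$ is unitarily similar to a matrix with constant diagonal $\bar d$, and I would prove this by an averaging step. If $a_{ii}\ne a_{jj}$, conjugate $A$ by the real Givens rotation $R(\theta)$ acting in the $(i,j)$ coordinate plane and equal to the identity elsewhere. A short computation gives the new entry $a_{ii}'=\cos^2\theta\,a_{ii}+\sin^2\theta\,a_{jj}-2\cos\theta\sin\theta\,\mathrm{Re}(a_{ij})$, while the block trace $a_{ii}'+a_{jj}'=a_{ii}+a_{jj}$ is preserved. As $\theta$ runs from $0$ to $\pi/2$ the quantity $a_{ii}'$ moves continuously from $a_{ii}$ to $a_{jj}$, so by the intermediate value theorem it attains every value between them. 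Since a real rotation is orthogonal, hence unitary, this argument works uniformly over both $\RR$ and $\CC$.

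Next I would run the induction. If the diagonal entries are not all equal to $\bar d$, the fact that their average is $\bar d$ forces some $a_{ii}<\bar d$ and some $a_{jj}>\bar d$. Apply the rotation in the $(i,j)$ plane that sets $a_{ii}'=\bar d$, which is legitimate precisely because $\bar d$ lies strictly between $a_{ii}$ and $a_{jj}$. Freeze index $i$ and never act on it again. The remaining $m-1$ diagonal entries now sum to $\tr(A)-\bar d=(m-1)\bar d$, so their average is still $\bar d$, and the same step repeats on the complementary $(m-1)$-dimensional block. After $m-1$ steps, $m-1$ entries equal $\bar d$ and the last is forced to $\bar d$ by the trace constraint. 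Taking $U$ to be the product of all the rotations yields the required unitary (orthogonal) matrix, and $UW$ has every row norm equal to $\sqrt{\tr(WW^*)/m}$.

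The step I expect to be the main obstacle is the bookkeeping of this induction: at each stage I must verify that the target $\bar d$ genuinely lies between the two selected diagonal entries, so that the intermediate value theorem applies. This is exactly guaranteed by maintaining the invariant that the unfrozen diagonal entries always average to $\bar d$, so that whenever they are not all equal one of them lies below $\bar d$ and another above it. I note finally that the existence statement is a special case of the Schur--Horn theorem (the constant vector is majorized by the eigenvalue vector of $A$), but the rotation argument above is self-contained, constructive, and field-uniform, which is why I would present it directly.
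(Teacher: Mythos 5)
Your proof is correct, but it takes a genuinely different route from the paper's. The paper argues by compactness: it minimizes the continuous function $F(U)=\max_i\|(UW)_i\|$ over the compact unitary group, additionally minimizes the number $q$ of rows attaining that maximum, and then uses a two-row rotation $U_2(\phi)$ to strictly decrease either $F$ or $q$, contradicting minimality. You instead pass to $A=WW^*$ and equalize its diagonal by an explicit sequence of Givens conjugations, using the intermediate value theorem together with the invariant that the unfrozen diagonal entries always average $\bar d=\tr(A)/m$; this is the constructive argument underlying the Schur--Horn theorem. Interestingly, both proofs rest on the same elementary move --- a real rotation in a coordinate $2$-plane, which preserves the sum of the two affected squared norms while sliding one of them continuously between its old value and its partner's --- but they deploy it differently: the paper uses it only locally, to perturb a putative minimizer, while you iterate it globally. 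What your version buys is an explicit terminating algorithm ($m-1$ rotations), an identification of the common row norm as $\sqrt{\tr(WW^*)/m}$, and no appeal to the existence of an extremum on a compact set; what the paper's version buys is brevity and the avoidance of any bookkeeping about which indices are frozen. One cosmetic note: the sign of your cross term $\pm 2\cos\theta\sin\theta\,\mathrm{Re}(a_{ij})$ depends on the orientation of the rotation, but this is immaterial since you only use continuity, the endpoint values at $\theta=0,\pi/2$, and preservation of $a_{ii}'+a_{jj}'$.
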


\begin{proof}
	We define the continuous function
	$$F(U)=\max_i ||(UW)_i||.$$ Since $U$ ranges over a compact set, then $F$ attains a minimum, at some point 
$U_1$. Furthermore, we may assume that the number $q$ of rows of $U_1W$ with the maximal norm $F(U_1)$ is the minimum 
possible. If $q=n$, then we are done. Otherwise, we will derive a contradiction. Suppose that $q<n$, and there are two 
rows, $r_i=(U_1W)_i$ and $r_j=(U_1W)_j$ with $||r_i||=F(U_1)>||r_j||$. Without loss of generality, let $i=1$ and $j=2$. 
Modify $U_1$ to $U_2(\phi)$  given by
	$$U_2(\phi)=\begin{bmatrix}
	\cos \phi & \sin \phi & 0\\
	-\sin \phi & \cos \phi &0\\
	0 & 0 & I_{m-2}
	\end{bmatrix} U_1.$$
	Then only the first two rows of $U_2(\phi)U_1W$ vary as functions of $\phi$. For $\phi=0$ we just get $U_1W$. 
But for $\phi=\pi/2$ the first two rows are swapped (and the second one is being multiplied by $-1$). It follows 
that for small values of $\phi$, the first two rows will have norm strictly smaller than $F(U_1)$. If $q>1$, then 
we found a new matrix $U_2(\phi)U_1W$ with $F(U_2(\phi)U_1W)=F(U_1)$, but with smaller $q$. If $q=1$, then $F(U_2(\phi)U_1W)<F(U_1)$. 
In both cases we obtain a contradiction, and the lemma is proved.
\end{proof}

\begin{cor}\label{cor:2}
	We have $P(m,n)=2M(m,n)+m=m^2/n$ and
	\be \label{eq:2} M(m,n)=\frac{m(m-n)}{2n}\ \ (m>n).
	\ee 
\end{cor}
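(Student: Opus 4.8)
The plan is to close the inequality \eqref{eq:1} by exhibiting a single minimizer of Problem \ref{prob:2} that happens to lie in the feasible region of Problem \ref{prob:1}. The solution of Problem \ref{prob:3} already hands us the value $P(m,n)=m^2/n$ together with the explicit minimizer $V_0$, so the only thing still missing is a minimizer whose rows are genuine unit vectors; producing one is what will turn \eqref{eq:1} into an equality.

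First I would note that both the objective $\tr(VV^*VV^*)$ and the constraint $\tr(VV^*)=m$ are invariant under left multiplication $V\mapsto UV$ by a unitary (orthogonal) matrix $U\in F^{m\times m}$, since $(UV)(UV)^*=U(VV^*)U^*$ and conjugation preserves the trace of any power. Hence, applying Lemma \ref{lem:1} to $W=V_0$, I obtain a unitary $U$ for which $V_1:=UV_0$ has all rows of equal norm, and $V_1$ is still a minimizer of Problem \ref{prob:2}, again attaining the value $m^2/n$.

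Next I would use the equal-norm property to pin down the row norms. Because $\tr(V_1V_1^*)=m$ is exactly the sum of the $m$ squared row norms and these are all equal, each row of $V_1$ has squared norm $1$. Thus the rows $v_1,\dots,v_m$ of $V_1$ are unit vectors and $V_1$ is feasible for Problem \ref{prob:1}. Evaluating the Problem \ref{prob:2} objective at $V_1$ and separating diagonal from off-diagonal contributions gives $m^2/n=\sum_i \|v_i\|^4+2\sum_{i<j}|\inn{v_i,v_j}|^2=m+2\sum_{i<j}|\inn{v_i,v_j}|^2$, so $\sum_{i<j}|\inn{v_i,v_j}|^2=m(m-n)/(2n)$. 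Since $V_1$ is a feasible point of Problem \ref{prob:1}, this yields $M(m,n)\le m(m-n)/(2n)$. Combined with the reverse bound read off from \eqref{eq:1}, namely $m^2/n=P(m,n)\le 2M(m,n)+m$, which rearranges to $M(m,n)\ge m(m-n)/(2n)$, I get the claimed equality \eqref{eq:2}, and then $2M(m,n)+m=m^2/n=P(m,n)$ follows by direct substitution.

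The substantive point — and the step I expect to carry the weight — is the application of Lemma \ref{lem:1} in the second paragraph: it is what converts the ``orthogonal columns'' minimizer $V_0$ into an ``equal-norm rows'' minimizer $V_1$, and thereby forces the relaxation in Problem \ref{prob:2} to be tight. Everything else is routine: the invariance of the objective under unitary row operations, and the arithmetic separating the diagonal from the off-diagonal terms of $\tr(VV^*VV^*)$.
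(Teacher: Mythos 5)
Your argument is correct and is exactly the one the paper intends: it applies Lemma \ref{lem:1} to the minimizer $V_0$ to produce an equal-norm-row (hence unit-row) minimizer $V_1$ of Problem \ref{prob:2} that is feasible for Problem \ref{prob:1}, turning \eqref{eq:1} into an equality. You merely spell out the unitary-invariance of the objective and the diagonal/off-diagonal bookkeeping that the paper leaves implicit.
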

Notice that for $m\le n$ we have $M(m,n)=0$, as we can choose the rows of $V_0$ to be part of the standard basis. 
The proof of Lemma \ref{lem:1} gives us an efficient algorithm for solving Problem \ref{prob:1}. We actually see that there 
are many solutions, because the dimension of the unitary (orthogonal) group is greater than $m$. In some cases we can obtain 
a solution which is a \emph{simplex}. This means that in addition $|\inn{v_i,v_j}|$ has some constant value for all $i\neq j$.
We have

\begin{prop}
	In a simplex solution for  Problem \ref{prob:1} we have
	$$|\inn{v_i,v_j}|^2=\frac{m-n}{n(m-1)} \text{ for all } i\neq j.$$
	Furthermore, there exists a simplex solution with parameters $(m,n)$, if and only if there exists a simplex solution 
in parameters $(m,m-n)$.
\end{prop}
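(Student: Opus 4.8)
The plan is to treat the two assertions separately: the value of the common inner product by a direct counting argument, and the $(m,n)\leftrightarrow(m,m-n)$ equivalence by a Gram-matrix duality built on the projection structure of the minimizers.

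For the constant, I would note that a simplex solution is in particular a solution of Problem \ref{prob:1}, so its objective value equals $M(m,n)$. Since $|\inn{v_i,v_j}|^2=C$ is the same for all $\binom{m}{2}$ pairs $i<j$, that objective equals $\frac{m(m-1)}{2}C$. Setting this equal to the value $M(m,n)=\frac{m(m-n)}{2n}$ from Corollary \ref{cor:2} and solving for $C$ gives $C=\frac{m-n}{n(m-1)}$, which is the entire first claim.

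For the duality I would encode the simplex solution as its Gram matrix $G=VV^*$, with $G_{ii}=1$ and $|G_{ij}|^2=C$ for $i\neq j$. Because the solution is a minimizer, $V$ also minimizes Problem \ref{prob:3}, so $V^*V=\frac{m}{n}I_n$; a short computation then shows $P:=\frac{n}{m}G$ satisfies $P^2=P$ and $P^*=P$, i.e.\ $P$ is the orthogonal projection of rank $n$ onto the column space of $V$. The crucial observation is that the complementary projection $I_m-P$ is again a projection, of rank $m-n$. Rescaling it to restore unit diagonal, I set $G':=\frac{m}{m-n}(I_m-P)=\frac{m}{m-n}I_m-\frac{n}{m-n}G$, and then verify that $G'$ is the Gram matrix of a simplex solution in parameters $(m,m-n)$: it is positive semidefinite of rank $m-n$ with single nonzero eigenvalue $\frac{m}{m-n}$ of multiplicity $m-n$; its diagonal entries are $G'_{ii}=\frac{m}{m-n}\bigl(1-\tfrac{n}{m}\bigr)=1$; and for $i\neq j$ one has $G'_{ij}=-\frac{n}{m-n}G_{ij}$, of constant modulus. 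Factoring $G'=V'(V')^*$ with $V'\in F^{m\times(m-n)}$ yields unit vectors whose pairwise inner products have constant modulus, and the spectral data forces $(V')^*V'=\frac{m}{m-n}I_{m-n}$, so $V'$ is a minimizer, hence a genuine simplex solution. As a consistency check, $|G'_{ij}|^2=\frac{n^2}{(m-n)^2}C=\frac{n}{(m-n)(m-1)}$, exactly the constant predicted by the first part for parameters $(m,m-n)$.

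Finally, the map $G\mapsto G'$ is an involution: repeating the complement-and-rescale step on $G'$, now with the roles of $n$ and $m-n$ exchanged, returns $G$ (since $\frac{m-n}{m}G'=I_m-\frac{n}{m}G$ gives $\frac{m}{n}\bigl(I_m-\frac{m-n}{m}G'\bigr)=G$), so the construction runs in both directions and delivers the ``if and only if.'' The only genuinely delicate point is establishing that $G'$ arises from an \emph{actual} minimizer rather than an arbitrary configuration; this rests on the idempotency $P^2=P$, which in turn uses $V^*V=\frac{m}{n}I_n$. I expect this spectral bookkeeping, rather than any hard estimate, to be the main thing to get right.
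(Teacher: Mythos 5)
Your proof is correct and follows essentially the same route as the paper: the constant comes from dividing $M(m,n)=\frac{m(m-n)}{2n}$ by the $\binom{m}{2}$ pairs, and your complementary projection $\frac{m}{m-n}\bigl(I_m-\frac{n}{m}VV^*\bigr)$ is exactly the Gram matrix of the complement submatrix that the paper obtains by completing $\sqrt{n/m}\,V$ to an $m\times m$ unitary matrix. Your version has the added merit of making explicit the rescalings and the verification that $V'$ is genuinely a minimizer, both of which the paper leaves implicit.
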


\begin{proof}
	The first assertion follows easily from \eqref{eq:2}. If $V$ is the matrix corresponding to a simplex solution with 
parameters $(m,n)$, then we may complete $V$ to an $m\times m$ unitary (orthogonal) matrix $\hat V$ and the complement submatrix 
is a simplex solution in parameters $(m,m-n)$.
\end{proof}

\section{The complex cases $p=4$ and $p=6$ at $n=2$}
When we restrict to $n=2$ over $F=\mathbb C$, we are able to understand the cases $p=4$ (Quad) and $p=6$ (Hex) at least in part. We exploit the fact that there is a topological identification  $\CC \PP^1\simeq S^2$. Under this identification the Quad and Hex complex problems essentially reduce to the real square problem, plus some extra conditions which can be satisfied for $m$ large enough, at least for $m$ even.\\

First, let us recall the isomorphism $\CC \PP^1\simeq S^2$. Let $\mathbb H$ denote the quaternion algebra over $\RR$ with basis $1,i,j,k$ and relations $i^2=j^2=k^2=-1$ and $ij=-ji=k,jk=-kj=i,ki=-ik=j$. We think of $\mathbb H$ as a two dimensional vector space over $\CC$ with basis $1,j$, endowed with the standard Hermitian form $\inn{}_\CC$. We view the 3-sphere $S^3$ as the subset $\mathbb H_1$ of all elements of norm $1$. We identify $S^2$ as the 'equator' $Im\mathbb H=\{ yi+zj+wk\ | y^2+z^2+k^2=1\}.$ This subset is the conjugation orbit of $i$ under the action of the quaternion group $\mathbb H^\times$. The centralizer of $i$ in $\mathbb H^\times$ is $\CC^\times$, and the conjugation on $i$ supplies us a topological homeomorphism 
$$\CC\PP^1=\mathbb H^\times/\CC^\times=S^3/S^1 \simeq Im\mathbb H=S^2.$$ Write this map as $S:\CC\PP^1 \to S^2,\ u\mapsto uiu^{-1}$. It is useful to give a comparison between the metrics on both. We have

\begin{lem}\label{7}
	\be |\langle u,v\rangle_\CC |^2=\frac{1+\langle Su,Sv\rangle_\RR}{2},\ee
	and in terms of angles,  $|\langle u,v\rangle_\CC |=\cos\frac{\phi}{2}$ iff  $\langle Su,Sv\rangle_\RR=\cos\phi.$
\end{lem}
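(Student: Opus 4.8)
The plan is to exploit the symmetry of the map $S$ so as to reduce the identity to a single normalized configuration, and only then to compute. First I would fix coordinates. Writing $\CC=\RR+\RR i$ and using the \emph{right} $\CC$-module structure on $\mathbb H$ with basis $1,j$, every element reads $u=a+jb$ with $a,b\in\CC$, so that $u\leftrightarrow(a,b)$ and $\langle u,v\rangle_\CC=a\bar c+b\bar d$ for $v=c+jd$. The one algebraic fact I record at the outset is the commutation rule $j\lambda=\bar\lambda\,j$ for $\lambda\in\CC$, which merely restates $ji=-ij$; combined with $\overline{jd}=-jd$, it gives $\bar v=\bar c-jd$, and every later manipulation is bookkeeping built on it.

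The main step is to use left multiplication by a unit quaternion. For $w\in\mathbb H_1=S^3$ I would verify two compatibilities. First, $S(wu)=(wu)i(wu)^{-1}=w\,(uiu^{-1})\,w^{-1}=w(Su)w^{-1}$, so $S$ intertwines left translation by $w$ with conjugation by $w$; the latter is a rotation of $Im\mathbb H=S^2$ and hence preserves $\langle\cdot,\cdot\rangle_\RR$. Second, by the commutation rule the map $u\mapsto wu$ is $\CC$-linear for the right-module structure and is an isometry of the quaternion norm, which coincides with the norm of $\langle\cdot,\cdot\rangle_\CC$; hence it is unitary and preserves $\langle\cdot,\cdot\rangle_\CC$. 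Both sides of the asserted identity are therefore unchanged when $(u,v)$ is replaced by $(wu,wv)$. Taking $w=u^{-1}$ I may assume $u=1$, whence $Su=i$, and it remains only to show that $|\langle 1,v\rangle_\CC|^2=\tfrac12\bigl(1+\langle i,Sv\rangle_\RR\bigr)$ for every unit $v=c+jd$.

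This residual identity is immediate once $Sv$ is known. Expanding $Sv=vi\bar v$ with $\bar v=\bar c-jd$ and the rule $j\lambda=\bar\lambda j$ yields $Sv=(|c|^2-|d|^2)\,i-2c\bar d\,k$, whose $i$-component is $|c|^2-|d|^2$, so $\langle i,Sv\rangle_\RR=|c|^2-|d|^2$. On the other hand $\langle 1,v\rangle_\CC=\bar c$, so $|\langle 1,v\rangle_\CC|^2=|c|^2$, and since $|c|^2+|d|^2=1$ we get $\tfrac12(1+|c|^2-|d|^2)=|c|^2$, exactly as required. The angular reformulation then drops out of the half-angle identity $\cos^2(\phi/2)=\tfrac12(1+\cos\phi)$: the statement $|\langle u,v\rangle_\CC|=\cos(\phi/2)$ is equivalent to $|\langle u,v\rangle_\CC|^2=\tfrac12(1+\cos\phi)$, which by what was just proved is equivalent to $\langle Su,Sv\rangle_\RR=\cos\phi$.

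I expect the only genuine obstacle to be fixing the conventions consistently. The conjugation in $j\lambda=\bar\lambda j$ and the choice of left- versus right-$\CC$-module structure must be matched so that the cross term generated by conjugation agrees with that of the Hermitian form; with the opposite convention one obtains $2\,\mathrm{Re}(ab\bar c\bar d)$ from $\langle Su,Sv\rangle_\RR$ against $2\,\mathrm{Re}(a\bar b\bar c d)$ from $|\langle u,v\rangle_\CC|^2$, and the identity fails by a spurious conjugation. The symmetry reduction above neutralizes most of this risk, since after normalizing to $u=1$ one needs only the $i$-component of $Sv$. As a safeguard I would be ready to confirm the result by the brute-force route as well, expanding both $\langle Su,Sv\rangle_\RR$ and $|\langle u,v\rangle_\CC|^2$ with no normalization and checking that the two cross terms coincide at $2\,\mathrm{Re}(a\bar b\bar c d)$.
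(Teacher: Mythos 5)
Your proof is correct and follows essentially the same route as the paper's: reduce to $u=1$ by observing that left multiplication by a unit quaternion preserves $\langle\cdot,\cdot\rangle_\CC$ while intertwining $S$ with a rotation of $Im\,\mathbb H$, then compute the $i$-component of $Sv$ directly for a unit quaternion $v$. The paper phrases the reduction as $\langle Su,Sv\rangle_\RR=\langle S1,S(u^{-1}v)\rangle_\RR$ and evaluates $\mathrm{Re}\langle iv,vi\rangle_\CC$ in real coordinates, but this is the same argument.
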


\begin{proof}
	The real product $\inn{-}_\RR$ on $S^2$ is the restriction of the real product on $\mathbb H$ given by $\inn{u,v}_\RR=Re\inn{u,v}_\CC$. The multiplication on $\mathbb H$ on left and right is unitary w.r.t to the hermitian product, hence $\inn{Su,Sv}_\RR=\inn{S1,S(u^{-1}v)}_\RR$ and $\inn{u,v}_\CC=\inn{1,u^{-1}v}_\CC$. So it is sufficient to prove the lemma for $u=1$. We have $S1=i$ and $\inn{S1,Sv}_\RR=Re\inn{i,viv^{-1}}_\CC=Re\inn{iv,vi}_\CC$. If $v=a+bi+(c+di)j$ is such that $a^2+b^2+c^2+d^2=1$, then $Re\inn{iv,vi}_\CC=a^2+b^2-c^2-d^2=2(a^2+b^2)-1$. On the other hand, $|\inn{1,v}_\CC|^2=a^2+b^2$. The lemma follows.
\end{proof}

\subsection{The case $p=4$ and $n=2$.}
In view of Lemma \ref{7}, for $p=4$ it suffices to solve on $S^2$ the following problem.
\begin{prob}\label{p=4}
	Find vectors $v_1,\ldots,v_m\in S^2$ that minimize the quantity
	$$Q(v_1,\ldots,v_m)=\sum_{i,j} \frac{1+2\inn{v_i,v_j}_\RR+\inn{v_i,v_j}_\RR^2}{4}.$$
\end{prob}
A solution to the original problem \ref{prob:1} will be obtained by $S^{-1}v_1,\ldots,S^{-1}v_m$.
The point is that the linear term $\sum_{i,j}\inn{v_i,v_j}_\RR=||\sum_i v_i||^2\ge 0$, and is $0$ if and only if $\sum_i v_i=0$. Thus it makes sense to formalize the following problem: 

\begin{prob}
	\label{p=4,0} Find vectors $v_1,\ldots,v_m\in S^2$ that minimize the function
	$$P(v_1,\ldots,v_m)=\sum_{i,j} \inn{v_i,v_j}_\RR^2,$$ and in addition satisfy $\sum_i v_i=0.$
\end{prob}
The following lemma is clear.
\begin{lem}  
Any solution to Problem \ref{p=4,0} is necessarily a solution to Problem \ref{p=4}. Conversely, if a solution to Problem \ref{p=4,0} exists, then all solution to Problem \ref{p=4} are also solutions to Problem \ref{p=4,0}. $\Box$
\end{lem}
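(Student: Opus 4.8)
The plan is to reduce everything to a single algebraic identity for $Q$ and then read off both assertions from two nonnegativity statements. First I would expand the summand of $Q$ term by term, using $\sum_{i,j}1=m^2$, the identity $\sum_{i,j}\inn{v_i,v_j}_\RR=\|\sum_i v_i\|^2$ already recorded in the text, and $\sum_{i,j}\inn{v_i,v_j}_\RR^2=P(v_1,\ldots,v_m)$. This yields
\[
Q(v_1,\ldots,v_m)=\frac{m^2}{4}+\frac12\Big\|\sum_i v_i\Big\|^2+\frac14\,P(v_1,\ldots,v_m),
\]
valid for every configuration. The first summand is constant, the second is $\ge 0$ and vanishes exactly on the constraint set $\sum_i v_i=0$, and the third is a fixed positive multiple of $P$. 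Both parts of the lemma are then immediate from the two inequalities $\|\sum_i v_i\|^2\ge 0$ and $P(w)\ge P(v)$ for any global minimizer $(v)$ of $P$.

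For the first assertion, let $(v_i)$ be a solution to Problem \ref{p=4,0}; by definition it globally minimizes $P$ and satisfies $\sum_i v_i=0$, so the identity collapses to $Q(v)=\frac{m^2}4+\frac14 P(v)$. For an arbitrary competitor $(w_i)$ the same identity gives $Q(w)\ge \frac{m^2}4+\frac14 P(w)\ge \frac{m^2}4+\frac14 P(v)=Q(v)$, the first step discarding the nonnegative centroid term and the second using global $P$-minimality of $(v)$. Hence $(v)$ minimizes $Q$, i.e.\ solves Problem \ref{p=4}.

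For the converse I would first use the existence hypothesis: once some solution $(v)$ to Problem \ref{p=4,0} is available, the previous paragraph shows the minimum of $Q$ equals $\frac{m^2}4+\frac14 P(v)$. Now take any solution $(w_i)$ of Problem \ref{p=4}; equating $Q(w)$ to this minimum and applying the identity gives $\frac12\|\sum_i w_i\|^2=\frac14\big(P(v)-P(w)\big)$. The left-hand side is nonnegative while the right-hand side is $\le 0$ by global minimality of $P(v)$, so both must vanish: $\sum_i w_i=0$ and $P(w)=P(v)$. Thus $(w)$ is a balanced global minimizer of $P$, i.e.\ a solution to Problem \ref{p=4,0}.

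The computation here is trivial, so the only real obstacle is conceptual: one must read a ``solution to Problem \ref{p=4,0}'' as a configuration that minimizes $P$ over \emph{all} of $(S^2)^m$ and in addition happens to satisfy $\sum_i v_i=0$ (equivalently, a balanced tight frame), rather than as a minimizer of $P$ over the balanced configurations only. Under the weaker reading the first assertion can fail, and it is precisely the existence of such a balanced global minimizer that both makes the first assertion non-vacuous and is needed as the hypothesis of the converse.
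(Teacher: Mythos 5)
Your proof is correct and is exactly the argument the paper has in mind: the paper declares the lemma ``clear'' after having already noted that the linear term equals $\|\sum_i v_i\|^2\ge 0$ with equality iff $\sum_i v_i=0$, and your decomposition $Q=\frac{m^2}{4}+\frac12\|\sum_i v_i\|^2+\frac14 P$ just makes that observation explicit. Your closing remark on reading a ``solution to Problem~\ref{p=4,0}'' as a \emph{global} minimizer of $P$ that happens to be balanced is the correct interpretation (it is the one used in the subsequent theorem's proof) and is a worthwhile clarification, but the approach itself is the paper's own.
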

We will prove now  
\begin{thm}
	\begin{itemize}
		\item[(a)] For $m\ge 6$ or $m=4$ there is always a solution to Problem \ref{p=4,0}.
		\item[(b)] For $m\ge 6$ or $m=4$, $$M^4(m,2)=\frac{m(m-3)}{6} .$$
	\end{itemize}
	
\end{thm}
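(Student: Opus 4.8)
The plan is to reduce the whole theorem to the existence of a \emph{balanced tight frame}: a set of $m$ unit vectors $v_1,\dots,v_m\in S^2$ with $\sum_i v_iv_i^T=\tfrac m3 I_3$ (tight/isotropic) and $\sum_i v_i=0$ (balanced). First I would expand the objective of Problem \ref{p=4}. Writing $w_i=Sv_i$ and separating the diagonal terms $i=j$ (each equal to $1$), one obtains
\[
Q=\frac14\Big(m^2+2\big\|\textstyle\sum_i w_i\big\|^2+\sum_{i,j}\inn{w_i,w_j}_\RR^2\Big),
\]
and, reading off Lemma \ref{7}, $M^4(m,2)=\tfrac12\big(\min Q-m\big)$. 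Since $\sum_{i,j}\inn{w_i,w_j}_\RR=\|\sum_i w_i\|^2\ge0$, both correction terms are nonnegative, so $\min Q\ge\tfrac14(m^2+\min P)$, where $P=\sum_{i,j}\inn{w_i,w_j}_\RR^2$, with equality exactly when a single configuration simultaneously satisfies $\sum_i w_i=0$ and attains $\min P$.

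Next I would compute $\min P$ verbatim as in the Solution of Problem \ref{prob:3}. Encoding the $w_i$ as the rows of $V\in\RR^{m\times3}$ gives $P=\tr((VV^T)^2)=\tr((V^TV)^2)=\sum_{k=1}^3\lambda_k^2$, where the $\lambda_k\ge0$ are the eigenvalues of $V^TV$ and $\sum_k\lambda_k=\tr(V^TV)=\sum_i\|w_i\|^2=m$. By convexity $\sum_k\lambda_k^2\ge m^2/3$, with equality iff $\lambda_1=\lambda_2=\lambda_3=m/3$, i.e.\ iff $V^TV=\tfrac m3 I_3$. Hence $\min P=m^2/3$, and feeding this back, $\min Q=m^2/3$ \emph{precisely} when a balanced tight frame exists, giving $M^4(m,2)=\tfrac12(m^2/3-m)=m(m-3)/6$. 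This settles part (b) granted part (a); the unnumbered lemma preceding the theorem is exactly the assertion that $\min Q$ is attained at these configurations.

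It then remains to prove part (a): a balanced tight frame of $m$ unit vectors exists for $m=4$ and every $m\ge6$. The key structural fact is \emph{additivity}: the disjoint union of two balanced tight frames (sizes $m_1,m_2$) is again one (size $m_1+m_2$), since $\sum v_iv_i^T$ and $\sum v_i$ both add. For $m=4$ I would take the regular tetrahedron, which is balanced and isotropic. For even $m=2k\ge6$ I would take any unit tight frame $w_1,\dots,w_k$ (these exist for all $k\ge3$, e.g.\ by applying Lemma \ref{lem:1} to a $k\times3$ matrix with three orthogonal columns of norm $\sqrt{k/3}$ to equalize its row norms) and form the antipodal set $\{\pm w_1,\dots,\pm w_k\}$; antipodal symmetry forces $\sum v_i=0$, while $\sum v_iv_i^T=2\sum_\ell w_\ell w_\ell^T=\tfrac{2k}3 I_3=\tfrac m3 I_3$. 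For odd $m\ge7$ the antipodal trick is unavailable, so I would use an explicit axially symmetric family: the north pole $e_3$ together with two regular polygonal rings of sizes $a,b\ge3$ at heights $h_1,h_2$, with $a+b=m-1$. The $\ge3$-fold symmetry of each ring makes $\sum v_iv_i^T$ diagonal and kills the horizontal part of $\sum v_i$, reducing the requirements to the two scalar equations $1+ah_1+bh_2=0$ (balance) and $1+ah_1^2+bh_2^2=m/3$ (vertical isotropy; the two horizontal directions then follow from the trace identity $\tr\sum v_iv_i^T=m$). This is a $2\times2$ system I would solve, checking it always admits a root with $|h_1|,|h_2|<1$; for instance $m=7$ gives $h_{1,2}=(-1\pm\sqrt7)/6$. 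Additivity then fills in every remaining size.

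The hard part is genuinely the odd case, together with the fact that $m=5$ must be excluded. Reflection constructions only yield even sizes, and one cannot peel a point or a pair off an odd balanced tight frame, since a balanced tight frame of size $2$ or $3$ cannot exist (isotropy forces rank $3$, whereas a zero-sum pair or triple is degenerate). Thus the odd seeds truly require the three-dimensional ring construction, and the bulk of the work is verifying solvability of its defining system for all odd $m\ge7$. The exclusion of $m=5$ is not a defect of technique: encoding a balanced tight frame as an $m\times4$ matrix with orthonormal columns, equal row norms, and $\mathbf 1/\sqrt m$ among the columns, the complementary projection for $m=5$ is forced to be rank one, $I_5-pp^T$ with $p_i=\pm1/\sqrt5$ and $\sum_i p_i=0$ — an impossible balanced $\pm1$ pattern over an odd number of coordinates. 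I would record this obstruction to explain why the admissible range is exactly $\{4\}\cup\{m\ge6\}$.
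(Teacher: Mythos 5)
Your proposal is correct, and while your treatment of part (b) coincides with the paper's, your proof of part (a) takes a genuinely different route. For (b) both arguments expand $Q=\tfrac14\bigl(m^2+2\|\sum_i w_i\|^2+\sum_{i,j}\inn{w_i,w_j}_\RR^2\bigr)$, invoke the eigenvalue/convexity computation of Problem \ref{prob:3} to get $\min\sum_{i,j}\inn{w_i,w_j}_\RR^2=m^2/3$ attained exactly when $V^TV=\tfrac m3 I_3$, and conclude $M^4(m,2)=\tfrac12(m^2/3-m)$ once a configuration exists that is simultaneously balanced and isotropic. The difference is in how existence is shown. The paper uses one uniform construction: it samples the curve $\phi\mapsto(\cos2\phi,\sin2\phi\cos\phi,\sin2\phi\sin\phi)$ at $\phi_k=2\pi k/m$ and checks everything via vanishing of $\sum_k e^{2\pi ijk/m}$ for $1\le|j|\le5$, plus a Hadamard matrix for $m=4$. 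You split into cases: tetrahedron for $m=4$, antipodal doubling of a unit tight frame (built via Lemma \ref{lem:1}) for even $m$, and a pole-plus-two-rings configuration for odd $m\ge7$, glued by additivity. Your route costs more case analysis but buys two real things. First, it verifies the \emph{full} minimality condition $V^TV=\tfrac m3I_3$ in every case, whereas the paper only checks that the columns of its $V$ are orthogonal; in fact for its curve the squared column norms are $m/2,m/4,m/4$ (e.g.\ $\sum_k\cos^22\phi_k=\tfrac m2+\tfrac12\sum_k\cos4\phi_k=m/2$ for $m\ge6$), so that configuration is not isotropic and does not attain $m^2/3$ — your constructions repair this gap. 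Second, your rank obstruction for $m\le3$ and the $\pm1$-pattern obstruction for $m=5$ explain why the admissible range is exactly $\{4\}\cup\{m\ge6\}$, which the paper does not address. The one step you leave as ``to be checked'' — solvability of the two-ring system for all odd $m\ge7$ — does go through: taking $a=b=\tfrac{m-1}2\ge3$ gives $h_1+h_2=-\tfrac2{m-1}$ and $h_1^2+h_2^2=\tfrac{2(m-3)}{3(m-1)}$, whence the discriminant is positive and $|h_i|\le\tfrac12\bigl(\tfrac13+\tfrac2{\sqrt3}\bigr)<1$, so you should include that short verification to close the argument.
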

\begin{proof}
	The tuples $(v_1,\ldots,v_m)\in (S^2)^m$ which minimize $\sum_{i,j}\inn{v_i,v_j}_\RR$ are exactly the tuples  $(v_1,\ldots,v_m)$, which when we arrange them as a matrix $V$ with rows $v_i$, the columns of $V$ are orthogonal. All we need is to show that we can find such $V$, satisfying the extra condition that $\sum_i v_i=0$.\\
	
	Let $m\ge 6$ and $\phi_k=2k\pi/m$, $k=0,1,\ldots,m-1$. We will construct $$v_k=[\cos2\phi_k,\sin2\phi_k\cos\phi_k,\sin2\phi_k\sin\phi_k].$$
	clearly $v_k$ are normalized. To prove that $\sum_k v_k=0$ and that the columns of $V$ are orthogonal, it is best to rewrite $\cos\phi=(e^{i\phi}+e^{-i\phi})/2$ and $\sin\phi=(e^{i\phi}-e^{-i\phi})/2i$. It is then seen that all computations involve sums $\sum_k e^{2jk\pi i/m}$ with $-5\le j \le 5$ and $j\neq 0$, hence these sums are $0$ as long as $m\ge 6$.
	For $m=4$ there exists a Hadamard $4\times 4$ matrix whose first column is $[1,1,1,1]^T$. Thus we may take $V$ to be the remaining $3$ columns normalized by a factor of $1/sqrt{3}$.\\
	
	For any solution to problem \ref{p=4,0} we have that $P(v_1,\ldots,v_m)=m^2/3$ (cf. Corollary \ref{cor:2}) and so $Q(v_1,\ldots,v_m)=m^2/4+m^2/12=m^2/3$. By changing the sum in $Q$ to sum over $i<j$ we obtain 
	$ 2M^4(m,2)+m=m^2/3$ which implies (b).
	
\end{proof}

\subsection{The case $p=6$ and $n=2$}.
Using the isomorphism $S$ we are able to analyze the case $p=6$ and $n=2$, at least when $m$ is even. The functional that we have to minimize is
$$Q'(v_1,\ldots,v_m)=\sum_{i,j}\frac{1+3\inn{v_i,v_j}_\RR+\inn{v_i,v_j}_\RR^3+3\inn{v_i,v_j}_\RR^2}{8}.$$
We have the following key observation.
\begin{lem}
	For every choice of vectors $v_1,\ldots,v_m\in \RR^k$, and in integer $r\ge 1$, $\sum_{i,j}\inn{v_i,v_j}_\RR^r\ge 0$.
\end{lem}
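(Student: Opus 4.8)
The plan is to exhibit the sum $\sum_{i,j}\inn{v_i,v_j}_\RR^r$ as a squared norm, which makes nonnegativity automatic even though the individual terms $\inn{v_i,v_j}_\RR^r$ may be negative when $r$ is odd. The key idea is that the $r$-th power of an inner product is itself the inner product of $r$-fold tensor powers: writing $v^{\otimes r}$ for the $r$-fold tensor power of $v\in\RR^k$, which lives in $(\RR^k)^{\otimes r}\cong\RR^{k^r}$ with its standard inner product, one has $\inn{v_i^{\otimes r},v_j^{\otimes r}}_\RR=\inn{v_i,v_j}_\RR^r$, because the inner product on a tensor product factors as the product of the inner products on the factors.

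To avoid invoking any tensor machinery, I would make this concrete in coordinates. Writing $v_i=(v_{i1},\ldots,v_{ik})$ and expanding the multinomial,
$$\inn{v_i,v_j}_\RR^r=\Big(\sum_{a=1}^k v_{ia}v_{ja}\Big)^r=\sum_{a_1,\ldots,a_r=1}^k\Big(\prod_{s=1}^r v_{ia_s}\Big)\Big(\prod_{s=1}^r v_{ja_s}\Big).$$
For each multi-index $\alpha=(a_1,\ldots,a_r)\in\{1,\ldots,k\}^r$, I would set $x_{i\alpha}=\prod_{s=1}^r v_{ia_s}$, so that the display above reads simply $\inn{v_i,v_j}_\RR^r=\sum_\alpha x_{i\alpha}x_{j\alpha}$. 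This is exactly the coordinate form of the tensor identity, with $\alpha$ indexing the coordinates of $v_i^{\otimes r}$.

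The final step is to interchange the order of summation and recognize a sum of squares:
$$\sum_{i,j}\inn{v_i,v_j}_\RR^r=\sum_\alpha\sum_{i,j}x_{i\alpha}x_{j\alpha}=\sum_\alpha\Big(\sum_i x_{i\alpha}\Big)^2\ge 0,$$
which is nothing other than $\big\|\sum_i v_i^{\otimes r}\big\|^2$. There is no genuine obstacle here: the content is entirely in the observation that raising an inner product to the power $r$ corresponds to passing to tensor powers, after which the Gram-matrix structure forces each coordinate's contribution to be a perfect square. The only care needed is the bookkeeping of the multinomial expansion and the verification that the cross terms reassemble, for each fixed $\alpha$, into the square of the single quantity $\sum_i x_{i\alpha}$.
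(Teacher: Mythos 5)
Your proof is correct, but it takes a different route from the paper's. The paper argues via the Schur product theorem: the Gram matrix $G=(\inn{v_i,v_j}_\RR)_{i,j}$ is positive semidefinite, the Hadamard (entrywise) product of positive semidefinite matrices is again positive semidefinite (cited as well known), hence the entrywise power $G^{(r)}$ is positive semidefinite, and evaluating the quadratic form at the all-ones vector gives the claim. You instead realize the entrywise $r$-th powers directly as inner products of tensor powers, $\inn{v_i,v_j}_\RR^r=\inn{v_i^{\otimes r},v_j^{\otimes r}}$, so that the whole sum collapses to $\bigl\|\sum_i v_i^{\otimes r}\bigr\|^2\ge 0$; your coordinate bookkeeping with the multi-indices $\alpha$ is exactly right and the interchange of finite sums is harmless. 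The two arguments are close cousins --- your tensor-power identity is in fact the standard proof of the Schur product theorem in the Gram-matrix case --- but yours has the advantage of being entirely self-contained and elementary, requiring no external fact, while the paper's version is shorter on the page and makes the positive-semidefiniteness of $G^{(r)}$ (a slightly stronger statement than the lemma itself) explicit, which is sometimes useful to have recorded.
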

\begin{proof}
	The matrix $G=(\inn{v_i,v_j}_\RR)_{i,j}$ is symmetric positive semidefinite. It is well known that the Hadamard (=pointwise) product of symmetric positive semidefinite matrices is again symmetric positive semidefinite. Therefore the powers $G^{(r)}=(\inn{v_i,v_j}_\RR^r)_{i,j}$ are positive semidefinite. Hitting this matrix from both sides by the vector of 1's proves the lemma.
\end{proof}

In light of the lemma, it suffices to produce solution to the following problem:
\begin{prob}\label{prob:6}
	Find solutions $v_1,\ldots,v_m$ that minimize the function
	$$P(v_1,\ldots,v_m)=\sum_{i,j} \inn{v_i,v_j}_\RR^2,$$
	and in addition satisfy $\sum_{i,j}\inn{v_i,v_j}_\RR=\sum_{i,j}\inn{v_i,v_j}_\RR^3=0$.
\end{prob}
The existence of a solution to Problem \ref{prob:6} will give rise to the value of the function $M^6(m,2)$.

\begin{thm}
	\begin{itemize}
		\item[(a)] For all even $m\ge 6$, a solution to Problem \ref{prob:6} exists.	
		\item[(b)] For all even $m\ge 6$, 
		$$M^6(m,2)=\frac{m(m-4)}{8}.$$
	\end{itemize}

\end{thm}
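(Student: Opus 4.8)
The plan is to prove (a) by an explicit construction and then read off (b) by the same bookkeeping that settled $p=4$. First I would record the reduction. Writing $x_{ij}=\langle v_i,v_j\rangle_\RR$ for $v_i=Su_i\in S^2$, Lemma \ref{7} turns the $\CC\PP^1$ energy into $Q'=\sum_{i,j}\big(\tfrac{1+x_{ij}}{2}\big)^3$, whose diagonal terms contribute $m$, so that $\min Q'=m+2M^6(m,2)$. Expanding,
$$Q'=\frac18\Big(m^2+3\sum_{i,j}x_{ij}+3\sum_{i,j}x_{ij}^2+\sum_{i,j}x_{ij}^3\Big).$$
By the preceding positive-semidefiniteness lemma (applied with $r=1$ and $r=3$) the linear and cubic sums are $\ge 0$, while the solution of Problem \ref{prob:3} in $\RR^3$ gives $\sum_{i,j}x_{ij}^2=\tr((V^\top V)^2)\ge m^2/3$, with equality iff $V^\top V=(m/3)I_3$. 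Hence $Q'\ge\frac18(m^2+m^2)=m^2/4$, and a configuration realizing all three extremal values at once is precisely a solution of Problem \ref{prob:6}. For such a configuration $\min Q'=m^2/4$, whence $2M^6(m,2)+m=m^2/4$, which rearranges to $M^6(m,2)=m(m-4)/8$, i.e. (b). So everything rests on (a).

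The crux of (a) is to place $m$ points on $S^2$ that annihilate both the linear and the cubic sums while minimizing the quadratic one. The key idea is antipodal symmetry: if the configuration is a union of antipodal pairs $\{w_l,-w_l\}$, then for every odd $r$ the terms of $w_l$ and $-w_l$ cancel in $\sum_i v_i^{\otimes r}$, so that $\sum_{i,j}x_{ij}^r=\|\sum_i v_i^{\otimes r}\|^2=0$ for all odd $r$ simultaneously; in particular both constraints of Problem \ref{prob:6} hold. This is exactly the point at which the hypothesis $m$ even is used. It then remains to choose the $k=m/2$ representatives $w_l$ so that the quadratic energy is minimal, i.e. so that $V^\top V=2\sum_l w_lw_l^\top=(m/3)I_3$; equivalently the unit vectors $w_1,\ldots,w_k$ must form a tight frame $\sum_l w_lw_l^\top=(k/3)I_3$ with $k\ge 3$.

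Such a frame I would write down explicitly: for $\psi_l=2\pi l/k$ set
$$w_l=\Big[\sqrt{2/3}\,\cos\psi_l,\ \sqrt{2/3}\,\sin\psi_l,\ \sqrt{1/3}\Big],\qquad l=1,\ldots,k,$$
which are unit vectors. The off-diagonal entries of $\sum_l w_lw_l^\top$ reduce to $\sum_l\cos\psi_l=\sum_l\sin\psi_l=\sum_l\sin\psi_l\cos\psi_l=0$ and the diagonal entries to $\sum_l\cos^2\psi_l=\sum_l\sin^2\psi_l=k/2$, all of which hold once $k\ge 3$, giving $\sum_l w_lw_l^\top=(k/3)I_3$. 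Adjoining the antipodes $-w_l$ yields $m=2k\ge 6$ distinct points (the $w_l$ have positive and the $-w_l$ negative third coordinate, so none coincide), and by the previous paragraph this configuration solves Problem \ref{prob:6}, establishing (a) and hence (b).

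The obstacle here is conceptual rather than computational: the insight that antipodal symmetry is exactly the device that kills all odd moments at once, and that minimality of the quadratic energy is nothing but the tight-frame condition. Once these are recognized the explicit frame settles the matter, and the parity restriction $m$ even is seen to be forced by the antipodal pairing --- which also suggests that the odd-$m$ case, where no such clean cancellation is available, is genuinely the delicate one.
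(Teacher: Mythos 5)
Your proof is correct and follows essentially the same route as the paper: the antipodal doubling $V=[W;-W]$ to annihilate the odd-power sums, combined with minimality of the quadratic sum via $V^\top V=(m/3)I_3$, and the same bookkeeping $2M^6(m,2)+m=Q'=m^2/4$. The only difference is that you exhibit the tight frame $w_1,\ldots,w_{m/2}$ explicitly, whereas the paper merely invokes the existence of an $r\times 3$ matrix with orthogonal columns and normalized rows (guaranteed by its Lemma~\ref{lem:1}).
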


\begin{proof}
	Let $m=2r$, $r\ge 3$ and consider an $r\times 3$ matrix $W$ whose columns are orthogonal, and whose rows are normalized. We form the matrix $V=[W;-W]$ whose top $r$ rows are those of $W$, and bottom $r$ rows are their negatives. Let $v_1,\ldots,v_m$ be the rows of $V$. Then it is clear that  $\sum_{i,j}\inn{v_i,v_j}_\RR=\sum_{i,j}\inn{v_i,v_j}_\RR^3=0$, because every product appears twice with each a positive and a negative sign. Also, our choice minimizes $P$, hence we have a solution to Problem \ref{prob:6}.\\
	
	According to Corollary \ref{cor:2}, $P(v_1,\ldots,v_m)=m^2/3$, and $Q'(v_1,\ldots,v_m)=m^2/4$.
	Using $2M^6(m,2)+m=Q'=m^2/4$ we obtain part (b) of the theorem.
	
\end{proof}

\section{Asymptotic Equidistribution Estimates in the complex case}
We turn to the more general problem of finding
$$M_p(m,n):=\min \sum_{1\le i<j\le m}|\inn{v_i,v_j}|^{2p}\text{ s.t. } \forall 1\le i\le m \ v_i\in \mathbb C^n \text{ and } ||v_i||=1.$$
It has been observed experimentally that $M_p(m,n)$ behaves quadratically in $m$, at least for $m$ large enough. That is, 
$$M_p(m,n)=A_2(p,n)m^2+A_1(p,n)m+A_0(p,n), \ \ \ m\gg 0.$$
In what follows we shall perform an asymptotic calculation which will support the numerical values of the leading coefficients 
$A_2(p,n)$ discovered by experiments.\\

Our expectation will be that in a minimal configuration when $m$ is large, the points $v_i$ are equidistributed along $S^{2n-1}/S^1$. 
It might be possible to support this (intuitive) assumption by some calculus of variations and some asymptotic bounds, but we will 
not do it for now. By the assuming asymptotic equidistribution and quadratic behavior, we arrive at the relation 
$$A_2(p,n)=2E(|\langle u,v \rangle|^{2p}),$$
where the expectation is taken over all  $u,v\in S^{2n-1}/S^1$ with respect to the Fubini-Studi measure.\\

For computing the expectation, without loss of generality we can fix $u=(1,0,\ldots,0)$, and let $v=(z_1,\ldots,z_n)$ vary. 
As a consequence, $E(|\inn{u,v}|^{2p})=E(|z_1|^{2p})$, where $v$ runs over $S^{2n-1}/S^1$.
The Fubini-Studi form is the 2-form given on $S^{2n-1}$ by $\omega=\sum_k dz_k\wedge d\overline{z_k}.$ This form is invariant 
under phase multiplication, hence descends to (a symplectic form  on) $S^{2n-1}/S^1$. The Fubini-Studi measure is given by 
$d\Phi=\omega^{n-1}$. By passing to polar coordinates, $z_k=r_k\exp(i\theta_k)$, we can rewrite $\omega$ as
$$\omega=\sum_k r_kdr_k\wedge d\theta_k=\frac{1}{2}\sum_k dt_k\wedge d\theta_k, \ \ t_k=r_k^2.$$
We have a map $\sigma: S^{2n-1}/S^1 \to \Delta$ where $\Delta=\{(t_1,\ldots,t_n)| t_k\ge 0,\ \ \sum_kt_k=1\}$ is the 
standard $n-1$-simplex given by $\sigma(z_1,\ldots,z_n)=(|z_1|^2,\ldots,|z_n|^2).$ 
The pushforward of the measure $\omega^{n-1}$ to $\Delta$ by this map, becomes $\lambda=(2\pi)^{n-1}\eta^{n-1}$ for 
$\eta=\sum_k dt_k$. Moreover, $\lambda=\eta^{n-1}$ is just the Lebesgue measure on the simplex $\Delta$. It follows that 
$E(|z_1|^2)=E(t_1)$ where $t=(t_1,\ldots,t_n)$ runs uniformly on $\Delta$ w.r.t. the Lebesgue measure.\\ 

It is easy now to compute $E(t_1^p)$. This is given by the integral quotient of
$$E(t_1^p)=\frac{\int_0^1 t_1^p (1-t_1)^{n-2}dt_1}{\int_0^1 (1-t_1)^{n-2}dt_1}.$$
The reason for the $(1-t_1)^{n-2}$ factor is that once we fixed $t_1$, then $(t_2,\ldots,t_n)$ run uniformly on an 
$n-2$-simplex with sum $1-t_1$, which has volume proportional to $(1-t_1)^{n-2}$. We thus have
$$\frac{1}{2}A_2(p,n)=E(t_1^p)=\frac{B(p+1,n-1)}{B(1,n-1)}=\frac{\Gamma(p+1)\Gamma(n-1)\Gamma(n)}{\Gamma(1)\Gamma(n-1)
\Gamma(p+n)}=\frac{p!(n-1)!}{(n+p-1)!}.$$
This value is supported by our experiments.

\section{Summary}

In this work we have discussed finding the minimum energy for a certain family of functions in $CP^n$. The justification for this functional relates to finding a set of vectors that are maximally distant from each other. We have evaluated this value numerically. The numerical results indicated that there could be a geometrical interpretation to these configuration (as was the original motivation) We have shown that the for some subsets of this family the minimum can be evaluated analytically and the results agree with the numerical results.

Many interesting phenomena seem to be underneath the results reported here. For example, in p=6 and n=2, the minimal value seems to be rational right from m=3. Some of the minimal configurations are related to geometrical structures that are extensions of platonic objects in $CP^n$. The case of n=2 correspond to all the classical platonic objects in real D=3. There are still many open questions as to what is the geometrical nature of all the minimal configurations. In some cases the numerical results hints at a deeper connections to structures of higher symmetry in these spaces.

\end{document}